\setlist{itemsep=4pt, topsep=4pt}
\def\chaptermark#1{}
\def\chapter{%
\if@openright\cleardoublepage\else\clearpage\fi
\thispagestyle{plain}\global\@topnum\z@
\@afterindenttrue \secdef\@chapter\@schapter}
\def\@chapter[#1]#2{\refstepcounter{chapter}%
\ifnum\c@secnumdepth<\z@ \let\@secnumber\@empty
\else \let\@secnumber\thechapter \fi
\typeout{\chaptername\space\@secnumber}%
\def\@toclevel{0}%
\ifx\chaptername\appendixname \@tocwriteb\tocappendix{chapter}{#2}%
\else \@tocwriteb\tocchapter{chapter}{#2}\fi
\chaptermark{#1}%
\addtocontents{lof}{\protect\addvspace{10\p@}}%
\addtocontents{lot}{\protect\addvspace{10\p@}}%
\@makechapterhead{#2}\@afterheading}
\def\@schapter#1{\typeout{#1}%
\let\@secnumber\@empty
\def\@toclevel{0}%
\ifx\chaptername\appendixname \@tocwriteb\tocappendix{chapter}{#1}%
\else \@tocwriteb\tocchapter{chapter}{#1}\fi
\chaptermark{#1}%
\addtocontents{lof}{\protect\addvspace{10\p@}}%
\addtocontents{lot}{\protect\addvspace{10\p@}}%
\@makeschapterhead{#1}\@afterheading}
\newcommand\chaptername{Chapter}
\def\@makechapterhead#1{\global\topskip 7.5pc\relax
\begingroup
\fontsize{\@xivpt}{18}\bfseries\centering
\ifnum\c@secnumdepth>\m@ne
  \leavevmode \hskip-\leftskip
  \rlap{\vbox to\z@{\vss
      \centerline{\normalsize\mdseries
          \uppercase\@xp{\chaptername}\enspace\thechapter}
      \vskip 3pc}}\hskip\leftskip\fi
 #1\par \endgroup
\skip@34\p@ \advance\skip@-\normalbaselineskip
\vskip\skip@ }
\def\@makeschapterhead#1{\global\topskip 7.5pc\relax
\begingroup
\fontsize{\@xivpt}{18}\bfseries\centering
#1\par \endgroup
\skip@34\p@ \advance\skip@-\normalbaselineskip
\vskip\skip@ }
\def\appendix{\par
\c@chapter\z@ \c@section\z@
\let\chaptername\appendixname
\def\thechapter{\@Alph\c@chapter}}
\newcounter{chapter}
\newif\if@openright
\def\@cite#1#2{{\m@th\upshape\bfseries%
[{#1\if@tempswa{\m@th\upshape\mdseries, #2}\fi}]}}
\theoremstyle{plain}
\newtheorem{thm}{Theorem}
\newtheorem{prop}[thm]{Proposition}
\newtheorem{lem}[thm]{Lemma}
\newtheorem{sublem}[thm]{Sublemma}
\theoremstyle{definition}
\theoremstyle{remark}
\numberwithin{equation}{subsection}
\newcommand{\nc}{\newcommand}
\newcommand{\rnc}{\renewcommand}
\nc\bA{\mathbb{A}}
\nc\bB{\mathbb{B}}
\nc\bC{\mathbb{C}}
\nc\bD{\mathbb{D}}
\nc\bE{\mathbb{E}}
\nc\bF{\mathbb{F}}
\nc\bG{\mathbb{G}}
\nc\bH{\mathbb{H}}
\nc\bI{\mathbb{I}}
\nc{\bJ}{\mathbb{J}} 
\nc\bK{\mathbb{K}}
\nc\bL{\mathbb{L}}
\nc\bM{\mathbb{M}}
\nc\bN{\mathbb{N}}
\nc\bO{\mathbb{O}}
\nc\bP{\mathbb{P}}
\nc\bQ{\mathbb{Q}}
\nc\bR{\mathbb{R}}
\nc\bS{\mathbb{S}}
\nc\bT{\mathbb{T}}
\nc\bU{\mathbb{U}}
\nc\bV{\mathbb{V}}
\nc\bW{\mathbb{W}}
\nc\bY{\mathbb{Y}}
\nc\bX{\mathbb{X}}
\nc\bZ{\mathbb{Z}}
\nc\cA{\mathcal{A}}
\nc\cB{\mathcal{B}}
\nc\cC{\mathcal{C}}
\rnc\cD{\mathcal{D}}
\nc\cE{\mathcal{E}}
\nc\cF{\mathcal{F}}
\nc\cG{\mathcal{G}}
\rnc\cH{\mathcal{H}}
\nc\cI{\mathcal{I}}
\nc{\cJ}{\mathcal{J}} 
\nc\cK{\mathcal{K}}
\rnc\cL{\mathcal{L}}
\nc\cM{\mathcal{M}}
\nc\cN{\mathcal{N}}
\nc\cO{\mathcal{O}}
\nc\cP{\mathcal{P}}
\nc\cQ{\mathcal{Q}}
\rnc\cR{\mathcal{R}}
\nc\cS{\mathcal{S}}
\nc\cT{\mathcal{T}}
\nc\cU{\mathcal{U}}
\nc\cV{\mathcal{V}}
\nc\cW{\mathcal{W}}
\nc\cY{\mathcal{Y}}
\nc\cX{\mathcal{X}}
\nc\cZ{\mathcal{Z}}
\nc\bfA{\mathbf{A}}
\nc\bfB{\mathbf{B}}
\nc\bfC{\mathbf{C}}
\nc\bfD{\mathbf{D}}
\nc\bfE{\mathbf{E}}
\nc\bfF{\mathbf{F}}
\nc\bfG{\mathbf{G}}
\nc\bfH{\mathbf{H}}
\nc\bfI{\mathbf{I}}
\nc{\bfJ}{\mathbf{J}} 
\nc\bfK{\mathbf{K}}
\nc\bfL{\mathbf{L}}
\nc\bfM{\mathbf{M}}
\nc\bfN{\mathbf{N}}
\nc\bfO{\mathbf{O}}
\nc\bfP{\mathbf{P}}
\nc\bfQ{\mathbf{Q}}
\nc\bfR{\mathbf{R}}
\nc\bfS{\mathbf{S}}
\nc\bfT{\mathbf{T}}
\nc\bfU{\mathbf{U}}
\nc\bfV{\mathbf{V}}
\nc\bfW{\mathbf{W}}
\nc\bfY{\mathbf{Y}}
\nc\bfX{\mathbf{X}}
\nc\bfZ{\mathbf{Z}}
\nc{\dmo}{\DeclareMathOperator}
\nc{\wt}{\widetilde}
\rnc{\Re}{\operatorname{Re}}
\rnc{\Im}{\operatorname{Im}}
\rnc{\span}{\operatorname{span}}
\dmo{\rank}{rank}
\dmo{\End}{End}
\dmo{\Hom}{Hom}
\dmo{\Jac}{Jac}
\dmo{\Id}{Id}
\dmo{\Ann}{Ann}
\dmo{\Area}{Area}
\dmo{\CP}{\bC P^1}
\dmo{\rk}{rk}
\dmo{\rel}{rel}
\dmo{\ra}{\rightarrow}
\dmo{\AGL}{\mathrm{AGL}}
\dmo{\AO}{\mathrm{AO}}
\dmo{\Sym}{\mathrm{Sym}}
\dmo{\Hur}{\mathrm{Hur}}
\dmo{\Aut}{\mathrm{Aut}}
\rnc{\Col}{\operatorname{Col}}
\nc{\ColOne}{\Col_{\bfC_1}}
\nc{\ColOneX}{\ColOne(X,\omega)}
\nc{\ColTwo}{\Col_{\bfC_2}}
\nc{\ColTwoX}{\ColTwo(X,\omega)}
\nc{\ColThree}{\Col_{\bfC_3}}
\nc{\ColThreeX}{\ColThree(X,\omega)}
\nc{\ColOneTwo}{\Col_{\bfC_1, \bfC_2}}
\nc{\ColOneTwoX}{\ColOneTwo(X,\omega)}
\nc{\ColOneThree}{\Col_{\bfC_1, \bfC_3}}
\nc{\ColOneThreeX}{\ColOneThree(X,\omega)}
\nc{\MOne}{\cM_{\bfC_1}}
\nc{\MTwo}{\cM_{\bfC_2}}
\nc{\MOneTwo}{\cM_{\bfC_1, \bfC_2}}
\nc{\MThree}{\cM_{\bfC_3}}
\nc{\MOneThree}{\cM_{\bfC_1, \bfC_3}}
\dmo{\For}{\cF}
\nc{\GL}{\mathrm{GL}^+(2, \bR)}
\renewcommand{\color}[1]{\unskip}
\title[Higher rank in genus three]{A short proof of the classification of higher rank invariant subvarieties in genus three}
\author[Apisa]{Paul~Apisa}
\subjclass[2010]{32G15, 37D40, 14H15}
\begin{document}

\begin{abstract}
We give a new short proof of the classification of rank at least two invariant subvarieties in genus three, which is due to Aulicino, Nguyen, and Wright. The proof uses techniques developed in recent work of Apisa-Wright.
\end{abstract}

\maketitle


\vspace{-5mm}

\section{Introduction}

The moduli space $\Omega \cM_g$ of pairs $(X, \omega)$ where $X$ is a genus $g$ Riemann surface and $\omega$ is a holomorphic $1$-form on $X$ admits a stratification by prescribing the number of zeros of $\omega$ and their orders of vanishing. These strata are invariant under the $\mathrm{GL}(2, \bR)$ action on $\Omega \cM_g$ generated by complex scalar multiplication, which fixes $X$ and acts on $\omega$, and Teichm\"uller geodesic flow. By work of Eskin-Mirzakhani \cite{EM} and Eskin-Mirzakhani-Mohammadi \cite{EMM}, the $\mathrm{GL}(2, \bR)$ orbit closure of a point in a stratum is an orbifold that is defined by real homogeneous linear equations in a natural coordinate system, i.e. period coordinates. These linear orbifolds are called \emph{invariant subvarieties} (the fact that they are subvarieties is due to Filip \cite{Fi1}). Given $(X, \omega)$ with zeros $\Sigma$ in a stratum $\cH$, $T_{(X, \omega)} \cH \cong H^1(X, \Sigma; \bC)$. Moreover, if $(X, \omega)$ is contained in an invariant subvariety $\cM$, then the image of the projection $p: T_{(X, \omega)} \cM \ra H^1(X; \bC)$ is symplectic by work of Avila-Eskin-M\"oller \cite{AEM}. The \emph{rank} of $\cM$ is defined to be half of the dimension of the image. The \emph{rel} of $\cM$ is the dimension of $\ker(p)$. The object of this note is to offer a new short proof of the following result due to Aulicino, Nguyen, and Wright \cite{NW,ANW,AN,AN2}.
\begin{thm}\label{T:Main}
The rank at least two invariant subvarieties in genus three strata are full loci of branched covers.
\end{thm}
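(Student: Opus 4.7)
The plan is to degenerate cylinders so that $\cM$ acquires a boundary component in a lower-dimensional stratum where the classification is already known, and then lift the covering structure back to $\cM$. The argument naturally splits by rank. If $\rank(\cM) = 3$, one immediately invokes Apisa--Wright's full-rank classification: a full-rank invariant subvariety in genus three is either a connected component of a stratum or the hyperelliptic component, both of which are tautologically full loci of branched covers (in the hyperelliptic case, the covering is the quotient by the hyperelliptic involution onto a stratum of quadratic differentials on $\CP$). So the real work lies in the case $\rank(\cM) = 2$.

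Given $\rank(\cM) = 2$, I would pick a generic $(X,\omega) \in \cM$, locate a horizontal cylinder $C$, and consider its $\cM$-equivalence class $\bfC$. By Wright's cylinder deformation theorem, the cylinders in $\bfC$ can be simultaneously shrunk, and by the Mirzakhani--Wright and Chen--Wright boundary theory this produces a boundary invariant subvariety $\cM' \subset \partial \cM$ sitting in a stratum of total genus at most two. Since $\rank(\cM') \le 2$ and $\cM'$ lives in genus at most two, the classification known in those cases (trivial in genus one, and McMullen's eigenform and hyperelliptic classifications together with Apisa--Wright's work in genus two) identifies each component of $\cM'$ as a full locus of branched covers. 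A small dimension count is needed at this stage to ensure that a judicious choice of $\bfC$ leaves $\cM'$ large enough to still be a covering locus rather than a single Teichm\"uller curve, since the latter would carry less structure than one can lift.

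The main obstacle is promoting the covering structure on $\cM'$ to one on all of $\cM$. Here I would invoke the Apisa--Wright reconstruction machinery: the deck transformations on surfaces in $\cM'$ extend by continuity to automorphisms of surfaces in a neighborhood of $\cM'$ inside $\cM$, with the extension dictated by how cylinders on nearby smooth surfaces pair up under $\cM$-equivalence. Period coordinates, combined with algebraicity of $\cM$ (Filip), then propagate this extension from a boundary neighborhood to all of $\cM$, yielding a family of branched covers parameterized by $\cM$ and realizing $\cM$ as the full covering locus. The hard part is arranging the cylinder degeneration so that the induced structure on $\cM'$ forces a \emph{unique} extension rather than allowing several, and verifying that the resulting involutions satisfy the compatibility needed to assemble into a single algebraic family; this is precisely where the Apisa--Wright techniques referenced in the abstract carry the bulk of the argument.
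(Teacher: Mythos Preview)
Your rank-three paragraph is essentially what the paper does (though the paper attributes the full-rank classification to Mirzakhani--Wright rather than Apisa--Wright).

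The rank-two argument, however, has a genuine gap, and the paper's proof is structured quite differently. Your plan hinges on two claims that do not hold as stated. First, you assert that collapsing an equivalence class $\bfC$ lands you in a stratum of total genus at most two. This is false in general: if the core curves of $\bfC$ are homologous (or span only a one-dimensional subspace of homology), collapsing can leave you in genus three, and nothing in your outline rules this out or explains how to choose $\bfC$ so that genus drops by two. The paper in fact devotes a separate lemma to producing such a $\bfC$ in the rel-zero case, and that lemma already requires knowing that $\cM$ is not a quadratic double. Second, your ``lifting'' step --- extending deck transformations from boundary surfaces to nearby smooth surfaces by continuity --- is not an available technique. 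There is no general mechanism by which an involution on a degenerate surface extends to nearby smooth surfaces in $\cM$; the holonomy involution on a quadratic double in the boundary simply need not come from an involution on the generic surface of $\cM$. You acknowledge this is ``the hard part'' and then defer it entirely to unspecified Apisa--Wright machinery, but that machinery does not do what you describe.

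What the paper actually does in rank two is an induction on \emph{rel}, not on genus. The rel-zero base case is handled by a direct cylinder analysis: one finds a generic equivalence class $\bfC_1$ whose collapse is a torus, applies a structural lemma about cylinder-rigid rank-one rel-one loci of tori to pin down the heights of all blocks, and derives an explicit picture forcing $(X,\omega)$ to be a double cover of an $\cH(2)$ surface --- a contradiction. For the inductive step, the paper invokes the Apisa--Wright geminal classification: if $\cM$ is geminal it is automatically an Abelian or quadratic double, and if not, one finds a specific non-geminal equivalence class and applies a sequence of structural lemmas (Lemmas~12.1--12.3 and the argument after Corollary~12.7 of Apisa--Wright, \emph{High rank invariant subvarieties}) that use the induction hypothesis on the boundary pieces $\cM_{\pm w}$, which have strictly smaller rel. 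At no point does the argument attempt to transport a covering map from the boundary back to $\cM$.
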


We note that McMullen \cite{Mc5} achieved a classification of invariant subvarieties in genus two with no rank restriction and Ygouf \cite{Ygouf-Rank1Genus3} classified rank one rel one invariant subvarieties in genus three strata with at most two zeros.

Fundamental to the proof of Theorem \ref{T:Main} will be the notion of a cylinder equivalence class, cylinder deformations, and the cylinder deformation theorem of Wright \cite{Wcyl} (see Apisa-Wright \cite{ApisaWrightDiamonds} for definitions and notation). 

The two types of invariant subvarieties that will be most useful in the sequel are \emph{geminal} and \emph{cylinder rigid} ones. A geminal invariant subvariety $\cM$ is one such that for each cylinder $C$ on $(X, \omega) \in \cM$, either $C$ is \emph{free}, i.e. all cylinder deformations in $C$ remain in $\cM$ or $C$ has a \emph{twin} cylinder $C'$, i.e. one of equal height and circumference, so that all cylinder deformations that deform $C$ and $C'$ equally remain in $\cM$. The main examples of geminal invariant subvarieties are Abelian and quadratic doubles, i.e. full loci of double covers of strata of Abelian and quadratic differentials. 


A \emph{cylinder rigid} invariant subvariety $\cM$ is one in which every equivalence class can be partitioned into subequivalence classes so that subequivalent cylinders remain subequivalent as long as they persist. A \emph{subequivalence class} of cylinders is a collection of cylinders that have a constant ratio of heights (belonging to a fixed finite set only depending on $\cM$) under all deformations that remain in $\cM$ and so that the standard shear in this collection of cylinders belongs to the tangent space. Examples of cylinder rigid invariant subvarieties include rel zero invariant subvarieties, see \cite[Lemma 4.2]{Apisa-MHD}, and invariant subvarieties in their boundaries, see \cite[Proposition 4.12]{Apisa-MHD}. 

We will be interested in cylinder rigid rank one rel one invariant subvarieties of tori. Let $\cN \subseteq \cH(0^m)$ be such a subvariety and let $(X, \omega)$ be a surface in $\cN$. Consider a cylinder direction, without loss of generality the horizontal, consisting of two subequivalence classes of cylinders $\bfA$ and $\bfB$ and shear $\bfA$ and $\bfB$ so that there is only one subequivalence class of cylinders in the vertical direction and so that any core curve of a vertical cylinder crosses the core curve of any horizontal cylinder exactly once. Given such a core curve, it travels distance $a_1$ in $\bfA$, then distance $b_1$ in $\bfB$, then distance $a_2$ in $\bfA$, etc. Call each component of $\overline{\bfA}$ or $\overline{\bfB}$ a \emph{block} and say that its \emph{height} is the distance between opposite boundary components. See the left side of Figure \ref{F:Overcollapse}.

\begin{lem}\label{L:CylinderRigidTori}
Let $\cN$ be a cylinder rigid rank one rel one invariant subvariety of tori. Let $(X, \omega) \in \cN$ be a horizontally periodic surface whose horizontal cylinders are partitioned into two subequivalence classes, $\bfA$ and $\bfB$. Then any two $\bfA$ (resp. $\bfB$) blocks have identical heights. Moreover, each $\bfA$ block consists of either one cylinder or two cylinders of equal height. 
\end{lem}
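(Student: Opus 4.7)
The plan is to exploit the subequivalence class structure of $\bfA$, $\bfB$, and the vertical cylinders, together with standard cylinder deformations: the shears of $\bfA$ and $\bfB$ and the overcollapse operation illustrated in Figure~\ref{F:Overcollapse}. The key principle is cylinder rigidity, which asserts that subequivalent cylinders maintain a constant ratio of heights in a fixed finite set under all deformations in $\cN$.

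First I would address the equality of $\bfA$ block heights. Since $\bfA$ is a subequivalence class, shearing $\bfA$ by a real parameter $s$ yields a one-parameter family of surfaces in $\cN$. A vertical trajectory in the original surface acquires horizontal displacement $s\,a_j$ each time it traverses the $j$-th $\bfA$ block, so for suitable rational values of $s$ new closed-trajectory decompositions of rational slope emerge in the sheared surface, with widths of the resulting cylinders depending explicitly on the individual $a_j$. Applying cylinder rigidity to the vertical subequivalence class (or its persistent analogue after the shear) forces ratios among these widths to remain in a fixed finite set while varying continuously in $s$, which can only happen if all $a_j$ coincide. The symmetric argument with $\bfB$ gives $b_1=\cdots=b_k$.

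For the block-size claim I would argue by contradiction: suppose some $\bfA$ block $\cA$ contains three or more horizontal cylinders, so that $\cA$ has at least two internal horizontal saddle connections separating cylinders with independent heights. Perform the overcollapse of Figure~\ref{F:Overcollapse}, squeezing the $\bfB$ cylinders through zero height; by \cite[Proposition 4.12]{Apisa-MHD} the resulting limit lies in a cylinder rigid invariant subvariety in the boundary of $\cN$, and the internal heights of $\cA$ now govern new cylinders in the limit surface that must inherit their own subequivalence structure. Combined with the original condition that $\bfA$ forms a single subequivalence class at $(X,\omega)$, this produces a contradiction unless $\cA$ contains at most two cylinders; in the two-cylinder case the same rigidity forces the two heights to coincide.

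The main obstacle I expect is executing the overcollapse argument precisely: identifying which cylinders persist into the limit, verifying that the overcollapse remains in the boundary of $\cN$ without creating unexpected saddle connections, and then translating the inherited subequivalence into the needed combinatorial constraint on $\cA$. Keeping a careful ledger of the cylinder decomposition before, during, and after the overcollapse, in a way that makes the finite-set hypothesis on height ratios yield the claimed restriction on block sizes, is the delicate technical point.
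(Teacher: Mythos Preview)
Your first step --- shearing $\bfA$ by a parameter $s$ and reading off the individual $a_j$ from the resulting cylinder widths --- does not work as written. A vertical core curve traverses \emph{every} $\bfA$ block exactly once, so after the shear it picks up total horizontal displacement $s\sum_j a_j$, identical for every vertical cylinder. The persistent vertical subequivalence class therefore sees only the aggregate $A=\sum a_j$, not the individual terms; nothing in the configuration distinguishes $a_1$ from $a_2$. Your sentence about ``widths of the resulting cylinders depending explicitly on the individual $a_j$'' is where the argument breaks: you would need to exhibit, for a continuous family of $s$, some specific cylinders whose moduli depend on the separate $a_j$, and you have not done so. The continuity-versus-finite-set tension you invoke does not arise, because there is no continuously varying ratio that involves a single $a_j$.

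The paper's argument is quite different and more concrete. One overcollapses $\bfB$ (and separately $\bfA$): the result is a surface \emph{still in $\cN$}, not in its boundary --- this is your other misconception --- whose horizontal blocks have heights $a_j + a_{j+1}$ (and similarly $b_j + b_{j+1}$). Cylinder rigidity forces these new block heights to be proportional to the old ones, producing a homogeneous linear system $a_j+a_{j+1}=b_j$, $b_j+b_{j+1}=c\,a_{j+1}$. Summing gives $2A=B$ and $2B=cA$, so $c=4$; the system has corank one, and the unique solution up to scale is $a_j\equiv a$, $b_j\equiv b=2a$. The block-size claim then follows from a second overcollapse (again remaining in $\cN$), as in \cite[Lemma 4.4]{Apisa-MHD}. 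The linear-algebra step after overcollapsing is the idea you are missing.
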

\begin{proof}
Recall that \emph{overcollapsing a subequivalence class $\bfC$} means applying a standard shear to $\bfC$ to shrink its height to zero, which amounts to traveling along a line segment in $\cN$, and then continuing slightly further along this line (see Apisa \cite[Lemma 4.4]{Apisa-Codim1Hyp} for a definition and a similar argument). An illustration of overcollapsing $\bfB$ is shown in Figure \ref{F:Overcollapse}.

\begin{figure}[h]
    \centering
    \includegraphics[width=.5\linewidth]{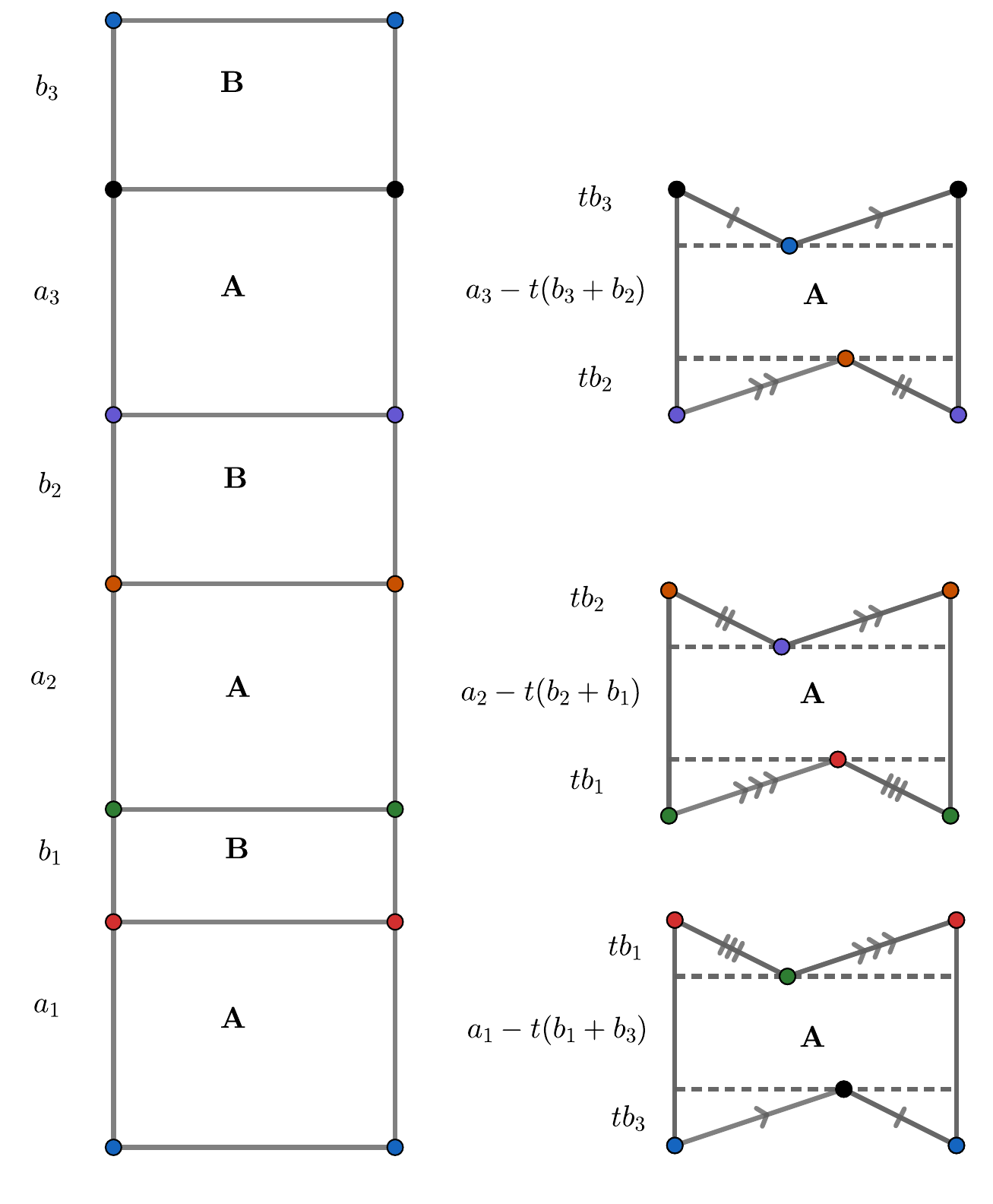}
    \caption{Overcollapsing $\bfB$. Opposite sides are identified unless otherwise indicated and $t$ is any sufficiently small positive real number. The figure on the left is the original surface $(X, \omega)$ and the figure on the right is the result of overcollapsing $\bfB$.}
    \label{F:Overcollapse}
\end{figure}

By definition of cylinder rigidity, the ratio of heights of cylinders (and hence of blocks) in $\bfA$ remain constant while overcollapsing. So, after applying a standard shear to $\bfA$ to make the first equality true, we see by overcollapsing $\bfA$ and $\bfB$ that there is some constant $c$ so that the following equations hold,
\[ a_1 + a_2 = b_1 \quad b_1 + b_2 = c \cdot a_2 \quad a_2 + a_3 = b_2 \quad b_2 + b_3 = c\cdot a_3 \quad etc. \]
Letting $A = \sum_i a_i$ and $B = \sum_i b_i$ we see that, provided that there are at least two $\bfA$ blocks, $2A = B$ and that $2B = cA$, which implies that $c = 4$. If $n$ is the number of $\bfA$ blocks, then this is a system of $2n$ equations in $2n$ unknowns that has rank at least $2n-1$ (since each subsequent equation involves a variable that has not appeared in a previous equation until we arrive at the equation $b_n + b_1 = a_1$). Since the system is homogeneous it has rank exactly equal to $2n-1$. Therefore, all solutions are multiples of each other, so the only solution has the form $a=a_i$ for all $i$, $b = b_i$ for all $i$, and $2a=b$ for some positive constant $a$.  Rephrased, each $\bfA$ (resp. $\bfB$) block has height $a$ (resp. $b$). By overcollapsing again, as in Apisa \cite[Lemma 4.4]{Apisa-MHD}, we see that each block consists of either a single cylinder or two cylinders of equal height.
\end{proof}

\begin{lem}\label{L:LoseTwoGenus}
Let $\cM$ be a rank two rel zero invariant subvariety in a genus three stratum without marked points. Suppose that $\cM$ is not a quadratic double. Let $\bfC$ be a generic equivalence class of cylinders on a surface $(X, \omega) \in \cM$ so that two core curves of cylinders in $\bfC$ are not homologous. Then every equivalence class in the complement of $\bfC$ consists of either one free cylinder or two cylinders with pairwise homologous core curves. Moreover, $\Col_{\bfC}(X, \omega)$ is genus one, $\bfC$ has exactly two cylinders, and, after possibly perturbing to a nearby surface in $\cM$, the complement of $\bfC$ contains an equivalence class with two cylinders whose core curves are homologous.
\end{lem}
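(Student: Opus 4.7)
The plan is to collapse the equivalence class $\bfC$ and analyze the resulting boundary invariant subvariety $\cM_{\bfC}$ containing $(X_0, \omega_0) := \Col_{\bfC}(X, \omega)$.

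First, I would determine the genus of $(X_0, \omega_0)$. By Wright's isotropy theorem, the span $V \subseteq H_1(X;\bZ)$ of the core curves of cylinders in $\bfC$ maps isotropically into the $4$-dimensional symplectic space $p(T_{(X,\omega)}\cM)$; since $\cM$ has rank two, $\dim V \leq 2$. The hypothesis that two core curves in $\bfC$ are non-homologous forces $\dim V = 2$. Because collapsing an equivalence class reduces the genus of the underlying surface by the dimension of this homological span, $(X_0, \omega_0)$ has genus $3 - 2 = 1$. Consequently $\cM_{\bfC}$ is a positive-dimensional invariant subvariety of a genus-one stratum, so its rank is exactly one, and being a boundary of a rel-zero invariant subvariety it is cylinder rigid by \cite[Proposition 4.12]{Apisa-MHD}.

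Next, to classify equivalence classes in the complement of $\bfC$, let $\bfD$ be such a class on $(X, \omega)$. Its cylinders persist on $(X_0, \omega_0)$ and lie in a single equivalence class $\bfD'$ of $\cM_{\bfC}$; rank-one isotropy applied to $\cM_{\bfC}$ forces all core curves in $\bfD'$ to be pairwise homologous in $H_1(X_0)$. Cylinder rigidity of $\cM_{\bfC}$ then partitions $\bfD'$ into subequivalence classes, one of which restricts to $\bfD$, and an overcollapsing argument modeled on Lemma~\ref{L:CylinderRigidTori} yields that each such subequivalence class consists of one cylinder or two cylinders of equal height. To upgrade from homologous-modulo-$V$ to genuinely homologous on $(X, \omega)$, I would argue that if $\bfD$ contained two cylinders with non-homologous core curves then $\bfD$ itself would satisfy the hypotheses placed on $\bfC$, and reapplying the lemma with $\bfC$ and $\bfD$ interchanged would contradict the fact that $\bfC$ lies in the complement of $\bfD$.

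The main obstacle is the cylinder count $|\bfC| = 2$. If $|\bfC| \geq 3$, the core curves of $\bfC$ lie in the two-dimensional span $V$ and so satisfy a nontrivial integer relation; I expect this relation, combined with cylinder rigidity and the rel-zero hypothesis, to produce an involution of $(X, \omega)$ exhibiting $\cM$ as a quadratic double, contradicting our hypothesis. This step is where the non-quadratic-double assumption should enter essentially. Finally, for the perturbation claim: since $\cM_{\bfC}$ has positive rel (the collapse creates new marked points or merges zeros), a small deformation within $\cM_{\bfC}$ can be arranged so that the horizontal cylinder decomposition of $(X_0, \omega_0)$ splits into at least two subequivalence classes; uncollapsing yields a nearby surface in $\cM$ whose complement of $\bfC$ contains an equivalence class of two homologous cylinders, as required.
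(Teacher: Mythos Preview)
Your outline diverges from the paper's argument in both order and substance, and several steps have genuine gaps.

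\textbf{Where the non-quadratic-double hypothesis actually enters.} You place this hypothesis in the step bounding $|\bfC|$, hoping a homology relation among three or more core curves will conjure an involution. The paper does not do this, and there is no reason to expect such a relation alone to produce a hyperelliptic-type involution. In the paper the hypothesis is used to guarantee the \emph{existence} of a non-free equivalence class $\bfD$ in the complement of $\bfC$: if every complement cylinder were free in a neighborhood, one perturbs (via the prototype lemma) to obtain a free cylinder nested inside another free cylinder, and then the nested free cylinder theorem \cite[Theorem 7.2]{ApisaWrightHighRank} forces $\cM$ to be a quadratic double. Your alternative route to the existence of $\bfD$, via ``positive rel of $\cM_{\bfC}$ allows splitting into two subequivalence classes,'' is not an argument as stated; one needs a concrete mechanism to produce a pair of homologous cylinders in the complement rather than merely more free ones.

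\textbf{The homology upgrade and the cylinder count.} Your swap argument (``reapply the lemma with $\bfC$ and $\bfD$ interchanged'') is circular: you are invoking the very statement you are proving. The paper's argument here is direct: if $\bfD$ contained two non-homologous core curves, then two such curves from $\bfC$ together with two from $\bfD$ give four disjoint simple closed curves on a genus-three surface, hence spanning an isotropic subspace of dimension at most three, so they satisfy a nontrivial homology relation; this relation mixes $\bfC$- and $\bfD$-cylinders and forces them to be generically parallel, a contradiction. Once $\bfD$ is in hand, $|\bfC|=2$ and the genus-one claim come from elementary topology: $(X,\omega)-\bfD$ is two once-holed tori, and each can contain at most one cylinder of $\bfC$ (a pair-of-pants argument), hence exactly one. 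The genus computation is then \cite[Lemma~10.4]{ApisaWrightHighRank}, not a general ``genus drops by $\dim V$'' principle, which you assert without justification and which is not the way the paper proceeds.
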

See \cite[Definition 4.15]{ApisaWrightHighRank} for the definition of a \emph{generic equivalence class}. Recall that $\Col_\bfC(X, \omega)$ is the result of applying a cylinder deformation, specifically a standard dilation, to $\bfC$ to send the heights of the cylinders in it to zero while collapsing a saddle connection. This surface is contained in an invariant subvariety $\cM_{\bfC}$ in the boundary of $\cM$. $\cM_{\bfC}$ has rank one rel one by \cite[Lemma 6.5]{ApisaWright} and is cylinder rigid by \cite[Proposition 4.12]{Apisa-MHD}. So Lemma \ref{L:LoseTwoGenus} allows us to apply Lemma \ref{L:CylinderRigidTori} to $\cM_{\bfC}$.
\begin{proof}
Let $\bfD$ be an equivalence class in the complement of $\bfC$. We will first show that the cylinders in $\bfD$ must have pairwise homologous core curves. If not, then $\bfC \cup \bfD$ consists of four disjoint cylinders whose core curves satisfy some homology relation, which yields the contradiction that the cylinders in $\bfC$ and $\bfD$ are generically parallel.

Suppose now that no equivalence class in the complement of $\bfC$ contains two homologous cylinders, i.e. suppose that every cylinder in the complement of $\bfC$ is free, and that the same holds in some neighborhood of $(X, \omega)$ in $\cM$. By the prototype lemma \cite[Lemma 9.2]{Apisa}, we may perturb the surface in $\cM$ so that the complement of $\bfC$ contains one free cylinder nested in another. But this implies that $\cM$ is a quadratic double by the nested free cylinder theorem \cite[Theorem 7.2]{ApisaWrightHighRank}, which contradicts our hypotheses.

Therefore, after perhaps perturbing, we can suppose that the complement of $\bfC$ contains an equivalence class $\bfD$ composed of two cylinders with homologous core curves.  Each component of $(X, \omega) -\bfD$ is, topologically, a torus with two boundary components. Each of these components contains at most one cylinder in $\bfC$ since otherwise cutting the core curves of $\bfC \cup \bfD$ would produce a component that was a pair of pants with two boundary curves in $\bfC$ and one in $\bfD$, implying that the cylinders in $\bfD$ were generically parallel to those in $\bfC$, a contradiction. Therefore, each component of $(X, \omega) - \bfD$ contains exactly one cylinder from $\bfC$. So $\Col_{\bfC}(X, \omega)$ has genus one by Apisa-Wright \cite[Lemma 10.4]{ApisaWrightHighRank}.
%
%
\end{proof}

\begin{prop}\label{P:Base}
If $\cM$ is a rank two rel zero invariant subvariety in a genus three stratum then it is a full locus of branched covers.
\end{prop}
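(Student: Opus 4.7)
The plan is to assume $\cM$ is not a quadratic double and deduce that $\cM$ is an Abelian double; since Abelian and quadratic doubles are both full loci of branched covers, this handles both cases. The first step is to produce, on some $(X,\omega)\in\cM$, a generic equivalence class of cylinders $\bfC$ two of whose core curves are not homologous. The rank two hypothesis rules out the alternative: if every generic equivalence class on every surface had pairwise homologous core curves, then the standard cylinder deformations would move $[\omega]$ within a one-dimensional subspace of $H^1(X;\bC)$, contradicting $\rank \cM = 2$. With such a $\bfC$ in hand, Lemma \ref{L:LoseTwoGenus} supplies, after a small perturbation in $\cM$, that $\bfC$ has exactly two cylinders, $\Col_{\bfC}(X,\omega)$ is a torus, and the complement of $\bfC$ contains an equivalence class $\bfD$ consisting of two cylinders with homologous core curves.

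Next, I apply Lemma \ref{L:CylinderRigidTori} to $\cM_{\bfC}$, which by the discussion following the statement of Lemma \ref{L:LoseTwoGenus} is a cylinder rigid rank one rel one invariant subvariety of tori. After arranging, by further perturbation if necessary, that the horizontal cylinders of $\Col_{\bfC}(X,\omega)$ split into two subequivalence classes $\bfA$ and $\bfB$ with $\bfD$ lying in one of them, Lemma \ref{L:CylinderRigidTori} forces each $\bfA$-block and each $\bfB$-block to have the common height of its class and to consist of either a single cylinder or a pair of cylinders of equal height.

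Finally I would lift this block information back across the collapse: each paired block becomes a twin pair of cylinders on $(X,\omega)$, $\bfD$ is already a twin pair, and the two cylinders of $\bfC$ are interchanged by the involution coming from the genus-one structure of $\Col_{\bfC}(X,\omega)$ (the collapse separates the torus and pairs the two $\bfC$-components). Running this argument in every cylinder direction and on every generic surface in $\cM$ shows that every cylinder on a generic surface is either free or has a twin, so $\cM$ is geminal. Combined with the known fact that geminal rank two rel zero invariant subvarieties in a genus three stratum are Abelian or quadratic doubles, our standing assumption that $\cM$ is not a quadratic double forces $\cM$ to be an Abelian double.

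The main obstacle I anticipate is the last step, namely converting the one-direction, one-surface block information into genuine geminality of $\cM$. One needs standard perturbation, density, and prototype-lemma arguments to propagate twinning seen in a single direction on a single collapse to every cylinder on every nearby surface. A secondary subtlety is the degenerate case of Lemma \ref{L:CylinderRigidTori} in which every block has size one; this case corresponds to $\Col_{\bfC}(X,\omega)$ having only free cylinders in the chosen direction, and must either be eliminated by varying $\bfC$ or shown to still produce the desired covering structure by a different route.
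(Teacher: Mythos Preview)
Your approach diverges from the paper's at the crucial final step, and the divergence is where the real gap lies. The paper does \emph{not} attempt to establish geminality of $\cM$ abstractly and then invoke the geminal classification. Instead, after setting up $\bfC_1$ (your $\bfC$) and the homologous pair $\bfB$ (your $\bfD$), it uses the prototype lemma to arrange the surface to be simultaneously horizontally and vertically periodic with a very specific combinatorial structure: vertical classes $\bfC_1,\bfC_2$ and horizontal classes $\bfA,\bfB$ with $\bfC_1\subset\bfA$ and $\bfB\subset\bfC_2$. It then applies Lemma~\ref{L:CylinderRigidTori} to $\Col_{\bfC_1}(X,\omega)$ \emph{and} analyzes $\Col_{\bfB}(X,\omega)$ to force the cylinders in $\bfC_1$ to be simple. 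Combined with the cylinder proportion theorem, this pins down the entire surface explicitly (Figure~\ref{F:Example}) and one simply \emph{sees} that it is a double cover of a surface in $\cH(2)$. No appeal to geminality is needed; the contradiction is obtained on a single surface.

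Your proposed route---lift the block data back, argue an involution swaps the two cylinders of $\bfC$, and then repeat in every direction to conclude geminality---does not work as stated. First, nothing in the collapse to a torus furnishes an involution of $(X,\omega)$ exchanging the two cylinders of $\bfC$; the twin condition requires equal heights \emph{and} equal circumferences, and neither is yet established (the paper uses the cylinder proportion theorem and a second collapse precisely to get these equalities). Second, even if you could show that this particular $\bfC$ and $\bfD$ are twin pairs, geminality demands the twin/free dichotomy for \emph{every} cylinder on \emph{every} surface, including cylinders lying in equivalence classes with pairwise homologous core curves, to which your argument does not apply. The ``propagate by density and prototype'' step you flag as the main obstacle is genuinely missing an idea, and the paper's proof shows that the correct move is not to propagate at all but to nail down one surface completely.

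A smaller issue: your justification for the existence of $\bfC$ is too loose. The paper invokes \cite[Theorems~1.5 and~1.7]{Apisa-MHD}, which use the hypothesis that $\cM$ is not a locus of branched covers; your heuristic about standard shears moving $[\omega]$ in a one-dimensional space does not immediately give a single equivalence class with non-homologous core curves.
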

\begin{proof}
Suppose in order to deduce a contradiction that $\cM$ is not a full locus of branched covers and that the surfaces in it have no marked points. By Apisa \cite[Theorems 1.5 and 1.7]{Apisa-MHD}, since $\cM$ is not a locus of branched covers, there is a generic equivalence class of cylinders $\bfC_1$ on a surface $(X, \omega) \in \cM$ whose core curves span a subspace of absolute homology of dimension at least two. 


Let $\bfB$ be any equivalence class whose cylinders do not intersect those of $\bfC_1$. By Lemma \ref{L:LoseTwoGenus} we may suppose that $\bfB$ contains two cylinders with homologous core curves so that each component of $(X, \omega) - \bfB$ contains one cylinder from $\bfC_1$. By applying an element of $\mathrm{GL}(2, \bR)$, one may suppose that $\bfC_1$ is vertical and $\bfB$ is horizontal. Up to applying the standard shear to $\bfC_1$ and $\bfB$ one may suppose that $(X, \omega)$ is vertically and horizontally periodic with vertical cylinder equivalence classes $\bfC_1$ and $\bfC_2$ and horizontal cylinder equivalence classes $\bfA$ and $\bfB$ so that $\bfC_1$ is contained in $\bfA$ and $\bfB$ is contained in $\bfC_2$. (This fact was called the prototype lemma in \cite[Lemma 9.2]{Apisa}.) By Lemma \ref{L:LoseTwoGenus}, $\bfC_2$ contains either one free cylinder or two cylinders with homologous core curves and $\bfA$ contains two cylinders, one on each component of $(X, \omega) - \bfB$. We will deduce the contradiction that $\cM$ is an Abelian double of $\cH(2)$. 

Suppose, possibly after shearing $\bfB$, that the core curves of $\bfC_2$ pass through the cylinders in $\bfA$ and $\bfB$ exactly once. (Since $\Col_{\bfC_1}(X, \omega)$ is genus one, it is easy to see that this is possible to arrange after collapsing $\bfC_1$ and so it is possible beforehand too). Since $\bfC_2$ contains either one cylinder or two homologous ones, it follows from Lemma \ref{L:CylinderRigidTori}, applied to $\Col_{\bfC_1}(\bfA)$ and $\Col_{\bfC_1}(\bfB)$ on $\Col_{\bfC_1}(X, \omega)$, that each cylinder in $\bfA$ contains either one saddle connection or two of equal length from $\Col_{\bfC_1}(\bfC_1)$. 

\begin{figure}[h]
    \centering
    \includegraphics[width=.85\linewidth]{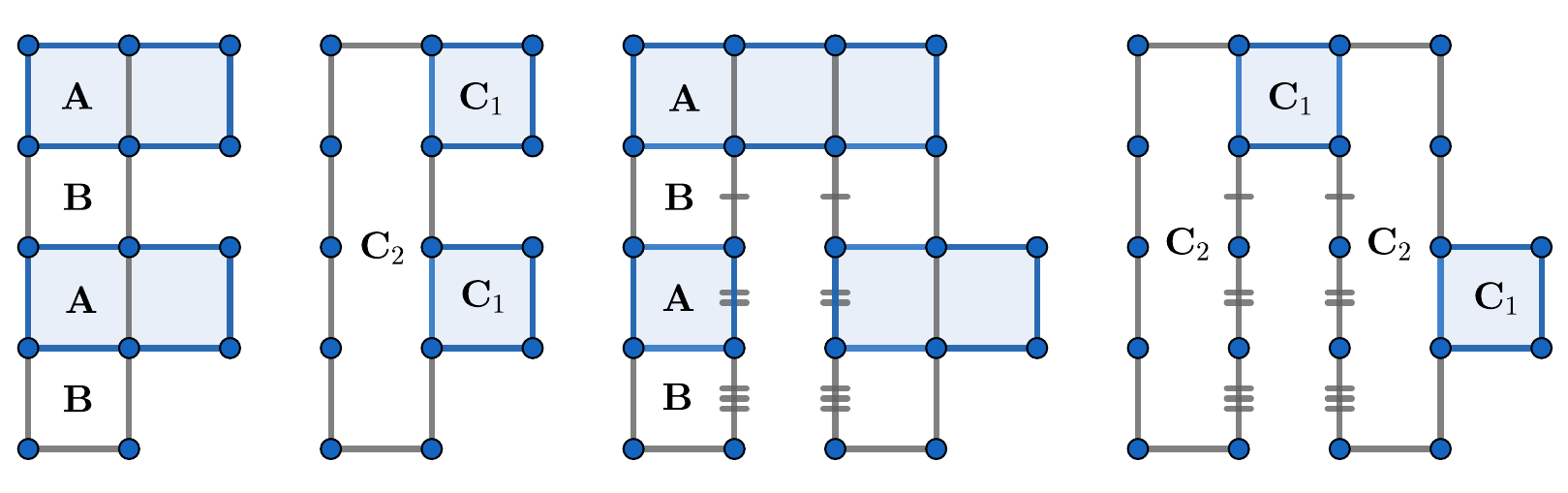}
    \caption{The two possible cases. Opposite sides are identified unless otherwise indicated. The case with $\bfC_2$ containing one (resp. two) cylinders appears on the left (resp. right). Each figure occurs twice to indicate the horizontal and vertical cylinders.}
    \label{F:Example}
\end{figure}


Since a combination of the standard shears of $\Col_{\bfB}(\bfC_1)$ and $\Col_{\bfB}(\bfC_2)$ must be rel on $\Col_{\bfB}(X, \omega)$, no cylinder in $\bfC_1$ has a saddle connection that appears in both of its boundaries. Since $(X, \omega)$ has no marked points and each component of $(X, \omega) - \bfB$ contains exactly one cylinder from $\bfC_1$, $\bfC_1$ is a pair of simple cylinders.

If there is just one cylinder in $\bfC_2$, we conclude that $(X, \omega)$ appears as the left two surfaces in Figure \ref{F:Example}. If there are two cylinders in $\bfC_2$, then the two cylinders in $\bfC_1$ belong to different components of $(X, \omega) - \bfC_2$, and we conclude that $(X, \omega)$ appears as the right two surfaces in Figure \ref{F:Example}. By Lemma \ref{L:CylinderRigidTori}, applied to $\Col_{\bfC_1}(X, \omega)$, the cylinders in $\bfA$ and $\bfB$ have the same heights. The same must also hold for $\bfC_1$ (for instance by the cylinder proportion theorem of Nguyen-Wright \cite[Proposition 3.2]{NW}) and also for $\bfC_2$ (for instance, when $\bfC_2$ has two cylinders, by symmetry of hypotheses on $\bfB$ and $\bfC_2$). This information completely describes the surface $(X, \omega)$, which we see is a double cover of surface in $\cH(2)$, a contradiction. 
\end{proof}

\begin{proof}[Proof of Theorem \ref{T:Main}:]
Suppose that $\cM$ is a rank at least two invariant subvariety without marked points in a genus three stratum. By Mirzakhani-Wright \cite{MirWri2}, if $\cM$ has rank three then it is a stratum or hyperelliptic locus. So suppose that the rank of $\cM$ is two. We will show that $\cM$ is a full locus of branched covers by induction on rel with Proposition \ref{P:Base} forming the base case. If $\cM$ is geminal, then it is an Abelian or quadratic double by Apisa-Wright \cite[Theorem 1.1]{ApisaWrightGeminal}. So suppose that $\cM$ is not geminal, i.e. suppose that there is a generic horizontal equivalence class $\bfC$ of cylinders on a surface $(X, \omega) \in \cM$ that certifies that $\cM$ is not geminal, i.e. $\bfC$ cannot be partitioned into twins and free cylinders.

\begin{sublem}
If $\bfD$ is a generic equivalence class of cylinders that is involved with rel on a surface $(X, \omega)$ in $\cM$ and $v$ is a typical degeneration in the twist space of $\bfD$, then $\cM_v$ is geminal.
\end{sublem}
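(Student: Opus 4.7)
The plan is to appeal to the outer induction on rel. The induction hypothesis says that every rank two invariant subvariety in a genus three stratum with rel strictly less than that of $\cM$ is a full locus of branched covers. Since full loci of branched covers in genus three with rank at least two are Abelian or quadratic doubles (by \cite[Theorem 1.1]{ApisaWrightGeminal} or directly from the deck group action pairing cylinders into twins), they are geminal. So it suffices to verify that $\cM_v$ is a rank two invariant subvariety in a genus three stratum of strictly smaller rel than $\cM$.

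First, because $\bfD$ is a \emph{generic} equivalence class, the boundary invariant subvariety $\cM_v$ obtained by degenerating along a twist-space vector has the same rank as $\cM$, namely two (this is the content of \cite[Lemma 6.5]{ApisaWright} applied to the cylinder-deformation degeneration). Second, because $\bfD$ is \emph{involved with rel}, its twist space in $T\cM$ contains a nonzero rel vector; a typical $v$ in the twist space can therefore be arranged to be a rel degeneration. Rel degenerations preserve the genus of the surface (they only collapse saddle connections, creating at worst extra marked points) and strictly decrease the rel, so $\rel(\cM_v) < \rel(\cM)$ and $\cM_v$ lies in a genus three stratum (possibly with extra marked points).

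Applying the induction hypothesis to $\cM_v$ (in the version that allows marked points, or after forgetting marked points and observing that branched-cover loci pull back to branched-cover loci), we conclude that $\cM_v$ is a full locus of branched covers, and hence geminal.

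The main point to be careful about is the claim that a typical degeneration in the twist space of $\bfD$ can be chosen to be purely rel, so that the genus is preserved and rank is not lost. This is where the hypotheses that $\bfD$ is generic and involved with rel are both needed, and is the step where one must invoke the degeneration results of \cite{ApisaWright, ApisaWrightHighRank} rather than rely on a surface-topology argument.
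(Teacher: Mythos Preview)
Your argument has a genuine gap: you prove the sublemma only for a \emph{chosen} $v$, not for an arbitrary typical degeneration $v$ in the twist space of $\bfD$, which is what the statement requires (and what the subsequent application of \cite[Lemmas 12.1--12.3]{ApisaWrightHighRank} needs). The phrase ``a typical $v$ \ldots can therefore be arranged to be a rel degeneration'' is where you change the statement: being \emph{involved with rel} only guarantees that the twist space contains some nonzero rel vector, not that every typical degeneration in it is rel. In particular, a typical $v$ can cause the genus of $\Col_v(X,\omega)$ to drop to two, and this case is simply absent from your argument.

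The paper's proof handles both possibilities. It first establishes that $\cM_v$ is prime and rank two (via \cite[Lemmas 9.1 and 11.4]{ApisaWrightHighRank}), so $\Col_v(X,\omega)$ is connected. If the genus stays at three, then since genus-preserving degenerations produce no marked points, the induction hypothesis applies directly---this is essentially your argument, but without the marked-point hand-waving. If the genus drops to two, the induction hypothesis is unavailable; instead one observes that $\cM_v$ is then full rank in genus two and has no free marked points (\cite[Lemma 11.6]{ApisaWrightHighRank}), forcing it to be a quadratic double. You would need to supply this second case, together with the primeness/connectedness step, to complete the proof.
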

The definition of a typical degeneration in the twist space and the boundary invariant subvariety $\cM_v$ can be found in \cite{ApisaWrightHighRank}.
\begin{proof}
The hypotheses imply that $\cM_v$ is prime by Apisa-Wright \cite[Lemma 9.1]{ApisaWrightHighRank} and rank two by Apisa-Wright \cite[Lemma 11.4]{ApisaWrightHighRank}. So $\Col_v(X, \omega)$ is connected. Since genus-preserving degenerations do not produce marked points, if $\Col_v(X, \omega)$ has genus three, then $\cM_v$ is an Abelian or quadratic double by the induction hypothesis. If $\Col_v(X, \omega)$ has genus two, then since $\cM_v$ is full rank and has no free marked points (by Apisa-Wright \cite[Lemma 11.6]{ApisaWrightHighRank}), it is a quadratic double in a genus two stratum.  
\end{proof}

The previous sublemma is enough to apply the arguments in Apisa-Wright \cite[Lemma 12.1 through Lemma 12.3]{ApisaWrightHighRank} to reduce to the case where all the rel of $\cM$ is supported on $\bfC$ and where there is a rel deformation supported on two homologous cylinders $C_1$ and $C_3$ in $\bfC$ of the form $w := i(\gamma_1^* - \gamma_3^*)$ where $\gamma_i$ is the core curve of $C_i$ and $\gamma_i^*$ is its Poincare dual. Moreover, $\bfC$ contains a third cylinder $C_2$ so that $\Col_w(C_2)$ and $\Col_w(C_3)$ are twins on $\Col_w(X, \omega)$ and similarly for $\Col_{-w}(C_2)$ and $\Col_{-w}(C_3)$ on $\Col_{-w}(X, \omega)$

\begin{sublem}
The cylinders in $\bfC$ are simple or half-simple and $\cM_{\pm w}$ is a quadratic double.
\end{sublem}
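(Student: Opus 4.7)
The plan is to invoke the previous sublemma at the rel degeneration $w$; its proof already yields that $\cM_{\pm w}$ is either an Abelian or a quadratic double. It then remains to rule out the Abelian double alternative and to extract the simple/half-simple topology of the cylinders in $\bfC$ from the resulting quadratic double structure.

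To rule out the Abelian case, I would argue by contradiction. Suppose $\cM_w$ is an Abelian double with deck involution $\tau$ (so $\tau^*\omega = \omega$) swapping the twin cylinders $\Col_w(C_2)$ and $\Col_w(C_3)$. Since $\tau$ is an orientation-preserving translation-surface automorphism, it carries $\gamma_2$ to $\gamma_3$ with matching orientations, endowing the pair $\Col_w(C_2), \Col_w(C_3)$ with a strongly symmetric structure on the boundary. Applying the same analysis to $\cM_{-w}$ (where the surviving cylinders form an analogous twin pair) and invoking the given homology relation $[\gamma_1] = [\gamma_3]$, this symmetric data lifts to constraints on $\bfC$ at $(X,\omega)$ itself: the circumferences and boundary combinatorics of $C_1, C_2, C_3$ become compatible with a twin/free partition of $\bfC$, contradicting that $\bfC$ certifies that $\cM$ is not geminal. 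Hence $\cM_{\pm w}$ must be a quadratic double.

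For the simple/half-simple claim, the quadratic double structure provides a quotient half-translation surface in which the twin pair $\Col_w(C_2), \Col_w(C_3)$ descends to a single cylinder while the collapsed $C_1$ contributes a prescribed configuration of zeros. Pulling back via the covering map constrains the boundary saddle connections of each $C_i$ on $(X,\omega)$ to be few in number; the genus three ambient stratum together with the no-marked-points hypothesis then forces each cylinder in $\bfC$ to be simple or half-simple.

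The main obstacle I expect is the Abelian-case ruling out: the twin structure on $\cM_w$ does not automatically lift to a twin structure on $\cM$ (which is not geminal by assumption), so the derivation must combine the two involutions at $\pm w$ with the homology relation $[\gamma_1] = [\gamma_3]$ to manufacture an explicit twin/free decomposition of $\bfC$ on $(X,\omega)$. A clean execution will likely route through the cylinder proportion information and overcollapsing arguments used earlier in the paper, transferred back through the rel degeneration.
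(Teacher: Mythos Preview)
Your proposal diverges from the paper's argument, and the divergence is where the gaps lie.

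The paper does not rule out the Abelian case by a symmetry-lifting argument at all. Instead it makes a concrete topological observation: cutting $(X,\omega)$ along the three core curves $\gamma_1,\gamma_2,\gamma_3$ produces a four-holed sphere and a two-holed torus, and only the torus piece can support equivalence classes disjoint from $\bfC$. Hence every cylinder in the complement of $\bfC$ is a single free cylinder. After perturbing to a square-tiled surface with the horizontal covered by $\bfC$ and one free cylinder $C$, some vertical equivalence class $\bfD$ misses $\bfC$ and so sits inside $C$; this is a nested free simple cylinder. That nested free cylinder persists on $\cM_{\pm w}$, and the nested free cylinder theorem (\cite[Theorem 7.2]{ApisaWrightHighRank}) then forces $\cM_{\pm w}$ to be a quadratic double outright. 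The Abelian alternative never needs to be addressed.

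Your proposed contradiction argument---that the two involutions at $\pm w$ together with $[\gamma_1]=[\gamma_3]$ would force a twin/free partition of $\bfC$ on $(X,\omega)$---is precisely the step you flag as uncertain, and I do not see how to make it work. Twin structure on a boundary invariant subvariety does not in general lift, and nothing in the cylinder proportion or overcollapsing machinery earlier in the paper supplies such a lifting. You would need a genuinely new argument here, and the paper simply sidesteps the issue.

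For the simple/half-simple claim, you are also missing the key input. The paper uses the Masur--Zorich classification of the five types of generic subequivalence classes on a quadratic double: three of them (complex cylinders, complex envelopes, half-simple cylinders as a class) force twins to appear in the complement, but we have just shown that the complement of $\Col_{\pm w}(\bfC)$ contains only free cylinders. That leaves simple and half-simple as the only options. Your sketch invokes the quotient surface and a genus count instead, which does not by itself pin down the cylinder types; the ``complement is free'' fact is what actually does the work.
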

\begin{proof}
We begin by showing that any cylinder in the complement of $\bfC$ is free. Note that $(X, \omega) - \bigcup_{i=1}^3 \gamma_i$ is the disjoint union of a sphere with four boundary components and a torus with two boundary components. Only the latter can contain equivalence classes in the complement of $\bfC$ and so each such equivalence class must consist of a single free cylinder. 

As in the prototype lemma or \cite[Lemma 7.10 (6)]{ApisaWrightHighRank}, we may perturb $(X, \omega)$ in $\cM$ so that it is square-tiled, with the horizontal covered by $\bfC$ and another equivalence class $\bfC'$, and so that there is at least one vertical equivalence class $\bfD$ that does not intersect $\bfC$. Since $\bfC'$ contains a single free cylinder, call it $C$, $C$ must contain $\bfD$ and hence contains a nested simple cylinder. This nested free cylinder persists on $\cM_{\pm w}$, which is cylinder rigid by the induction hypothesis. By the nested free cylinder theorem \cite[Theorem 7.2]{ApisaWrightHighRank}, which we note applies to cylinder rigid invariant subvarieties, $\cM_{\pm w}$ is a quadratic double. 

By Masur-Zorich \cite{MZ}, see also \cite[Section 4.1]{ApisaWrightDiamonds}, there are five types of subequivalence classes of generic cylinders on a quadratic double. For three of them, those coming from complex cylinders, complex envelopes, and half-simple cylinders, all cylinders in at least one component of their complement come in twins. The remaining two subequivalence classes are preimages of simple cylinders and simple envelopes under the quotient by the holonomy involution. Since no cylinder in the complement of $\Col_{\pm w}(\bfC)$ has a twin, $\Col_{\pm w}(\bfC)$ and hence $\bfC$ consists of half-simple and simple cylinders.
\end{proof}

The previous sublemma is a replacement for \cite[Corollary 12.7]{ApisaWrightHighRank}. With this substitution, the proof follows verbatim the argument beginning after Corollary 12.7 in \cite{ApisaWrightHighRank}.
\end{proof}


\noindent \textbf{Acknowledgements.} The author was partially supported by NSF Grant DMS Award No. 2304840.

\bibliography{mybib}{}
\bibliographystyle{amsalpha}
\end{document}